\theoremstyle{plain}
\newtheorem{thm}{Theorem}[section] 
\newtheorem{cor}[thm]{Corollary}
\newtheorem{lem}[thm]{Lemma}
\theoremstyle{definition}
\newtheorem{defn}[thm]{Definition}
\theoremstyle{remark}
\newtheorem{rem}[thm]{Remark}
\numberwithin{equation}{section}
\def\<{\left<}
\def\>{\right>}
\def\cstar{$C^*$-algebra}
\begin{document}
\title[Operator systems in matrix algebras]{The noncommutative Choquet boundary III:
\\Operator systems 
in matrix algebras}
\author{William Arveson}
%
%
\address{Department of Mathematics,
University of California, Berkeley, CA 94720}
\email{arveson@math.berkeley.edu}
%
\date{25 October, 2008}

\begin{abstract}  We classify operator systems 
$S\subseteq \mathcal B(H)$ 
that act on finite dimensional Hilbert spaces $H$  
by making use of the noncommutative Choquet boundary.  
$S$ is said to be {\em reduced} when its boundary ideal is $\{0\}$.  
In the category of operator systems, that property 
functions as semisimplicity does in the category of 
complex Banach algebras.    

We construct explicit examples of reduced 
operator systems using sequences of ``parameterizing maps" 
$\Gamma_k: \mathbb C^r\to \mathcal B(H_k)$, $k=1,\dots, N$.  We show 
that every reduced operator system is isomorphic to one 
of these, and that two 
sequences give rise to isomorphic operator systems if and only if 
they are ``unitarily equivalent" 
parameterizing sequences.  

Finally, we construct nonreduced operator systems $S$ that have a 
given boundary ideal $K$ and a given 
reduced image in $C^*(S)/K$, and  
show that these constructed examples 
exhaust the possibilities.  
\end{abstract}

\maketitle

\section{Introduction}\label{S:in}

This paper continues the series \cite{arvChoq} and \cite{arvChoq2} by addressing the 
problem of classifying operator spaces that generate finite dimensional \cstar s.  While 
this is a restricted class of operator spaces in which many 
subtle topological 
obstructions disappear, one can also argue that it contains all  
of the noncommutativity.  
Correspondingly, our classification results will make 
essential use of the noncommutative Choquet boundary.  

An {\em operator space} is a norm closed linear subspace of the algebra $\mathcal B(H)$ of bounded 
operators on a Hilbert space $H$.  Operator spaces are the objects of a category that refines the 
classical category of Banach spaces in a significant way, 
the refinement being that morphisms in the category 
of operator spaces are completely bounded linear maps rather than
bounded linear maps.  In the operator space category, 
maps are typically endowed with their completely bounded norm.  
If one restricts to the smaller category of operator spaces with {\em completely contractive}  
linear maps, one obtains a noncommutative refinement of the category of Banach spaces in 
which the term {\em classification} means classifying operator 
spaces up to completely isometric isomorphism.  As we have said above, 
this paper addresses (somewhat indirectly) the 
problem of classifying operator spaces that can be realized as subspaces of $\mathcal B(H)$ 
where $H$ is a {\em finite dimensional} Hilbert space.  

We say indirectly because we do not deal directly with operator spaces below, but rather with operator 
{\em systems}.  An operator system is an operator space that is closed under the $*$-operation of 
$\mathcal B(H)$ and which contains the identity operator.  Throughout the matrix hierarchy over 
an operator system $S$ there are enough positive operators to generate the space of self adjoint matrices 
over $S$.  Correspondingly, the morphisms of the category of operator 
systems are the unit-preserving completely positive (UCP) linear maps.  
An {\em isomorphism} of operator systems $S_1$, $S_2$ is a UCP map $\phi:S_1\to S_2$ 
that has a UCP inverse $\phi^{-1}:S_2\to S_1$; and it is known that this is equivalent to the existence 
of a completely isometric linear map of $S_1$ on $S_2$ that carries the unit of $S_1$ to 
the unit of $S_2$. 
 
Paulsen's device (see  p. 104 of \cite{paulsenBk2} or p. 21 of \cite{BlLeMbook}) allows one 
to associate an operator system $\tilde S\subseteq \mathcal B(H\oplus H)$ 
with an arbitrary operator space $S\subseteq \mathcal B(H)$ in the following way 
$$
\tilde S=\{
\begin{pmatrix}
\lambda\cdot\mathbf 1& s\\t^*&\mu\cdot\mathbf 1
\end{pmatrix}
: s,t\in S,\ \lambda, \mu\in\mathbb C\}, 
$$
and this association of $\tilde S$ with $S$ is functorial in that 
completely contractive maps of operator spaces $S$ give rise to 
UCP maps of operator systems $\tilde S$.  In this way, many if not 
most results about operator systems lead directly 
to results in the somewhat broader category of operator spaces.  Consequently, 
we shall work exclusively with operator 
systems throughout this paper.

The fundamental fact about general operator systems 
is that there is a {\em largest} closed two-sided ideal $K$ in the 
$C^*$-subalgebra $C^*(S)$ generated by $S$ such that the natural map 
$x\in C^*(S)\mapsto \dot x\in C^*(S)/K$ is completely isometric on $S$.  
In this paper we will 
follow \cite{arvChoq2} by referring 
to this ideal $K$ as the {\em boundary ideal} for $S$.  The associated embedding 
$\dot S\subseteq C^*(S)/K$ is called the $C^*$-envelope of $S$.  

\begin{defn}\label{inDef1}
An operator system $S\subseteq C^*(S)$ 
is said to be {\em reduced} if its boundary ideal is $\{0\}$. 
\end{defn}
We have found it useful to view the property of Definition \ref{inDef1} as the proper counterpart for 
operator systems of the semisimplicity property of complex algebras, especially 
for operator systems in matrix algebras.  Thus, the boundary ideal functions 
for operator systems 
in much the same way as the radical does for complex algebras.  

Our main results address the problem of classifying 
reduced operator systems $S\subseteq \mathcal B(H)$ 
that act on a finite dimensional Hilbert space $H$, and can be summarized 
as follows.  The \cstar\ generated by such an operator system decomposes uniquely 
into a central direct sum of matrix algebras 
\begin{equation}\label{inEq0}
C^*(S)=A_1\oplus\cdots\oplus A_N, \qquad A_k\cong \mathcal B(H_k)
\end{equation}
and it has several integer invariants associated with it: 
the dimension $d$ of $S$ itself, the number 
$N$ of mutually inequivalent irreducible representations 
of its generated \cstar\  $\pi_k: C^*(S)\to \mathcal B(H_k)$, 
and the dimensions $n_k=\dim H_k$, $1\leq k\leq N$ of these representations.  These numbers satisfy only 
one obvious constraint, namely
\begin{equation}\label{inEq1}
\dim \pi_k(S)\leq n_k^2, \qquad k=1,\dots, N,   
\end{equation}
and consequently $d\leq n_1^2+\cdots+n_N^2$.  

Given a set of numbers $d, N, n_1,\dots, n_N$, we first show how one 
constructs examples of reduced operator systems $S$ 
of dimension $d$ that have these integer invariants.  
By a $*$-vector space we mean a finite dimensional 
complex vector space that has been endowed with a 
distinguished conjugation (an antilinear map $z\mapsto \bar z$ satisfying $\bar{\bar z}=z$), 
and a distinguished ``unit" $\mathbf 1$ - a nonzero self adjoint element of $Z$.  
Nothing is lost if one thinks of $Z$ as the $*$-vector space $\mathbb C^d$ with involution 
$(z_1, \dots, z_d)^*=(\bar z_1, \dots, \bar z_d)$ and 
unit $\mathbf 1=(1,1, \dots, 1)$.   For every $k=1,\dots, N$ let $H_k$ be a 
Hilbert space of dimension $n_k$, and let 
$$
\Gamma_k: Z\to \mathcal B(H_k) 
$$
be a linear map that preserves the $*$-operation and maps the unit of $Z$ to the identity 
operator of $\mathcal B(H_k)$.  We assume 
further that these maps $\Gamma_k$ satisfy the following three requirements: 
\begin{enumerate}
\item[(i)] {\em Irreducibility}: For each $k=1,\dots, N$,  
$\Gamma_k(Z)$ is an irreducible space of operators in $\mathcal B(H_k)$.  
\item[(ii)] {\em Faithfulness:} $\ker\Gamma_1\cap\cdots\cap\ker\Gamma_N=\{0\}$.  
\item[(iii)] {\em Strong separation}:  For every $k=1,\dots, N$, there is a positive integer 
$p=p(k)$ and a $p\times p$ matrix $(z_{ij})=(z_{ij}(k))\in M_p(Z)$ with entries in $Z$ such that 
\begin{equation}\label{inEq2}
\|(\Gamma_k(z_{ij}))\|>\max_{j\neq k}\|(\Gamma_j(z_{ij}))\|, \qquad k=1,2,\dots, N.  
\end{equation}
\end{enumerate}
Property (i) simply means that $C^*(\Gamma_k(Z))=\mathcal B(H_k)$, and property (ii) 
can be arranged in general 
by replacing $A=\mathbb C^d$ with a suitably smaller parameter space, 
if necessary.    
Property (iii) is critical, and as we shall see, it connects with the 
noncommutative Choquet boundary in an essential way.  

The $N$-tuple of operator mappings ${\bf \Gamma}=(\Gamma_1,\dots, \Gamma_N)$ gives rise 
to an operator system $S_{\bf \Gamma}$ acting on $H_1\oplus\cdots\oplus H_N$ as follows
\begin{equation}\label{inEq3}
S_{\bf \Gamma}=\{\Gamma_1(z)\oplus\cdots\oplus\Gamma_N(z): z\in Z\},   
\end{equation}
and by property (ii), we have $\dim(S_{\bf \Gamma})=\dim Z=d$.  

\begin{thm}\label{inThm1}
The operator system $S_{\bf \Gamma}$ is reduced, the center 
of $C^*(S_{\bf \Gamma})$ is $\mathbb C^N$,   
and the numbers $n_1, \dots, n_N$ are given by $n_k=\dim H_k$ for $k=1,\dots, N$.  
Conversely, every reduced operator system with the same integer invariants is isomorphic to 
an $S_{\bf \Gamma}$ constructed in this way from an $N$-tuple  of linear maps 
$\Gamma_k: Z\to \mathcal B(\tilde H_k)$, $k=1,\dots, N$ satisfying properties 
(i), (ii), (iii).    
\end{thm}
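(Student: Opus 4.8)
The plan is to prove the two halves of the theorem separately, with the strong separation property (iii) supplying the decisive input in each. \emph{Forward direction.} The first and main step is to show that the central projection $Q_k$ of $\mathcal B(H_1\oplus\cdots\oplus H_N)$ onto the $k$th summand lies in $C^*(S_{\bf \Gamma})$. Write $\pi_k$ for the compression of $C^*(S_{\bf \Gamma})$ to $H_k$ and $\pi_k^{(p)}=\id_{M_p}\otimes\pi_k$ for its $p$-fold amplification; by property (i) the irreducible set $\Gamma_k(Z)$ generates $\mathcal B(H_k)$, so $\pi_k$, and hence $\pi_k^{(p)}$, is surjective. Now fix $k$, let $(z_{ij})\in M_p(Z)$ be the matrix furnished by (iii), and put $T=\bigl(\bigoplus_l\Gamma_l(z_{ij})\bigr)_{ij}\in M_p(C^*(S_{\bf \Gamma}))$; then $T=\bigoplus_l T_l$ with $T_l=(\Gamma_l(z_{ij}))_{ij}$ and $\|T_k\|>\|T_l\|$ for $l\neq k$. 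Choosing $\alpha$ with $\max_{l\neq k}\|T_l\|^2<\alpha<\|T_k\|^2$, the spectral projection $E=\chi_{(\alpha,\infty)}(T^*T)$ is (in finite dimensions) a polynomial in $T^*T$ and so lies in $M_p(C^*(S_{\bf \Gamma}))$, and it satisfies $\pi_l^{(p)}(E)=\chi_{(\alpha,\infty)}(T_l^*T_l)=0$ for $l\neq k$ while $\pi_k^{(p)}(E)\neq 0$. Since $\mathcal B(H_k)$ is simple, the identity of $M_p(\mathcal B(H_k))$ can be written as $\sum_i a_i\,\pi_k^{(p)}(E)\,b_i$; lifting the $a_i,b_i$ through the surjection $\pi_k^{(p)}$ to $\tilde a_i,\tilde b_i\in M_p(C^*(S_{\bf \Gamma}))$ and forming $F=\sum_i\tilde a_i E\tilde b_i$ gives an element of $M_p(C^*(S_{\bf \Gamma}))$ with $\pi_k^{(p)}(F)=\mathbf 1$ and $\pi_l^{(p)}(F)=0$ for $l\neq k$, i.e.\ $F=\mathbf 1_{M_p}\otimes Q_k$, and reading off a diagonal entry yields $Q_k\in C^*(S_{\bf \Gamma})$. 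With all the $Q_k$ in hand we get $C^*(S_{\bf \Gamma})=\bigoplus_k Q_kC^*(S_{\bf \Gamma})Q_k=\bigoplus_k C^*(\Gamma_k(Z))=\bigoplus_k\mathcal B(H_k)$, so the center is $\mathbb C^N$, the $\pi_k$ are precisely the inequivalent irreducible representations (distinct kernels), and $n_k=\dim H_k$; the equality $\dim S_{\bf \Gamma}=d$ has already been noted from (ii).

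To finish the forward direction one checks that $S_{\bf \Gamma}$ is reduced. A nonzero closed two-sided ideal of $\bigoplus_k\mathcal B(H_k)$ has the form $K_F=\bigoplus_{k\in F}\mathcal B(H_k)$ for some nonempty $F$, and the quotient map carries $(z_{ij})\in M_p(S_{\bf \Gamma})$ to the matrix with entries $\bigoplus_{l\notin F}\Gamma_l(z_{ij})$, whose norm is $\max_{l\notin F}\|(\Gamma_l(z_{ij}))\|$. Taking $k_0\in F$ together with the matrix furnished by (iii) for $k_0$, we have $\|(\Gamma_{k_0}(z_{ij}))\|>\max_{l\neq k_0}\|(\Gamma_l(z_{ij}))\|\ge\max_{l\notin F}\|(\Gamma_l(z_{ij}))\|$, while the norm of this matrix in $M_p(C^*(S_{\bf \Gamma}))$ equals $\max_l\|(\Gamma_l(z_{ij}))\|\ge\|(\Gamma_{k_0}(z_{ij}))\|$; hence the quotient by $K_F$ strictly decreases its norm and so is not completely isometric on $S_{\bf \Gamma}$. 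No nonzero ideal can therefore be the boundary ideal, so $S_{\bf \Gamma}$ is reduced by Definition~\ref{inDef1}.

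\emph{Converse.} Let $S\subseteq\mathcal B(H)$ be reduced with $H$ finite dimensional, so that $C^*(S)$ is a finite dimensional $C^*$-algebra and the direct sum of its inequivalent irreducible representations $\pi_k:C^*(S)\to\mathcal B(H_k)$, $k=1,\dots,N$, is a $*$-isomorphism of $C^*(S)$ onto $\mathcal B(H_1)\oplus\cdots\oplus\mathcal B(H_N)$; here $N$ and the $n_k=\dim H_k$ are exactly the integer invariants of $S$. Take $Z=S$, with its given involution and unit, as the $*$-vector space, so $\dim Z=d$, and set $\Gamma_k=\pi_k|_S$. A $*$-isomorphism of $C^*$-algebras is unital and completely isometric, so its restriction to $S$ is a unital completely isometric linear bijection of $S$ onto $S_{\bf \Gamma}$, which by the criterion recalled in the introduction is an isomorphism of operator systems; thus $S\cong S_{\bf \Gamma}$. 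Property (i) holds because $C^*(\Gamma_k(Z))=\pi_k(C^*(S))=\mathcal B(H_k)$, and (ii) holds because $\bigcap_k\ker\pi_k=\{0\}$. For (iii), were it to fail for some $k_0$ we would have $\|(\pi_{k_0}(z_{ij}))\|\le\max_{j\neq k_0}\|(\pi_j(z_{ij}))\|$ for all $p$ and all $(z_{ij})\in M_p(S)$; then $\|(z_{ij})\|_{M_p(C^*(S))}=\max_j\|(\pi_j(z_{ij}))\|$ would coincide with the norm of the image of $(z_{ij})$ under the quotient of $C^*(S)$ by the nonzero ideal $\bigcap_{j\neq k_0}\ker\pi_j\cong\mathcal B(H_{k_0})$, making that quotient completely isometric on $S$ and contradicting reducedness. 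So (iii) holds and $S$ has the asserted form.

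The step I expect to be the real obstacle is the construction of the central projections $Q_k$ inside $C^*(S_{\bf \Gamma})$ in the first paragraph: this is precisely where property (iii) is indispensable, and it forces one to combine a finite dimensional spectral-calculus argument in the matrix amplification with the simplicity of the blocks $\mathcal B(H_k)$ and the surjectivity coming from (i). Everything afterwards — the reducedness of $S_{\bf \Gamma}$ and the identification in the converse — follows by playing the definition of the boundary ideal against the norm formula for central direct sums, the only delicate point being to keep the enumerations of the $\pi_k$ and of the $\Gamma_k$ matched so that the integer invariants agree on the nose. What ultimately emerges is the clean equivalence ``$S$ is reduced $\iff$ property (iii) holds for the maps $\Gamma_k=\pi_k|_S$'', which is the concrete counterpart, for operator systems in matrix algebras, of the statement that such a system is reduced exactly when every irreducible representation of its generated $C^*$-algebra is a boundary representation in the sense of the noncommutative Choquet boundary.
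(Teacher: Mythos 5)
Your proof is correct, but it takes a genuinely different route from the paper's. The paper runs everything through the noncommutative Choquet boundary: for the forward direction it gets the central projections from irreducibility and mutual disjointness of the $\pi_k$, then shows each $\pi_k$ is peaking and invokes Theorem \ref{prThm1} (peaking $\Leftrightarrow$ boundary, which rests on Theorem 6.2 of \cite{arvChoq2}) together with Corollary \ref{pcCor1} (which rests on Theorem \ref{pcThm1} and \cite{arvSubalgI}) to conclude $S_{\bf\Gamma}$ is reduced; in the converse it derives property (iii) from the fact that every irreducible representation of a reduced system is a boundary, hence peaking, representation. You instead argue entirely by hand: you manufacture the central projections $Q_k$ from the peaking matrices of (iii) by finite dimensional spectral calculus plus simplicity of $\mathcal B(H_k)$ and surjectivity of $\pi_k^{(p)}$ (all legitimate; note the polynomial giving $E$ may have a constant term, which is harmless since $\mathbf 1=L(\mathbf 1_Z)\in S_{\bf\Gamma}$), and you obtain both reducedness and, in the converse, property (iii) by comparing the norm formula $\max_l\|(\Gamma_l(z_{ij}))\|$ with the corresponding maximum over the complement of an ideal $\bigoplus_{k\in F}\mathcal B(H_k)$, played directly against the definition of the boundary ideal as the largest ideal whose quotient is completely isometric on $S$. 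What your approach buys is a self-contained, elementary proof (no appeal to \cite{arvChoq}, \cite{arvChoq2}, \cite{arvSubalgI}) that makes transparent the equivalence ``reduced $\Leftrightarrow$ strong separation'' in matrix algebras; what the paper's approach buys is the conceptual identification ``reduced $\Leftrightarrow$ every irreducible representation is a boundary representation,'' which is reused elsewhere (Theorem \ref{pcThm2} in the proof of Theorem \ref{inThm2}, and the analysis of nonreduced systems in Section \ref{S:nr}) and whose peaking formulation is the one that generalizes beyond finite dimensions. One cosmetic discrepancy: you take $Z=S$ as the parameter $*$-vector space, whereas the construction fixes $Z=\mathbb C^d$ with the standard involution and unit; the paper bridges this by choosing a self adjoint basis $s_1,\dots,s_d$ of $S$ with $s_1+\cdots+s_d=\mathbf 1$, and you should either do the same or remark that $(S,{}^*,\mathbf 1)$ is isomorphic as a unital $*$-vector space to $\mathbb C^d$.
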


Theorem \ref{inThm1} reduces the classification problem for reduced operator systems 
acting on finite dimensional Hilbert spaces to the problem of 
determining when two sequences of parameterizing maps give rise to isomorphic 
operator systems.  Notice that the operator space $S_{\bf \Gamma}$ 
does not change if we compose 
each map of 
the sequence ${\bf \Gamma}=(\Gamma_1,\dots,\Gamma_N)$ with a single automorphism of the unital $*$ structure 
of the parameter space $Z$.  Moreover, it is easy to check 
that the {\em isomorphism class} of $S_{\bf \Gamma}$ 
does not change if we replace each $\Gamma_k$ with a unitarily equivalent 
map $\tilde\Gamma_k$, or if we permute the component maps $\Gamma_1,\dots, \Gamma_N$.  
Thus we say that two parameterizing 
sequences are {\em equivalent} if they have the same length 
${\bf \Gamma}=(\Gamma_1,\dots, \Gamma_N)$ and 
$\tilde{\bf \Gamma}=(\tilde\Gamma_1,\dots, \tilde\Gamma_N)$, 
there is a unit-preserving $*$-automorphism $\theta: Z\to Z$ of the parameter 
space, a permutation $\sigma$ of $\{1,2,\dots, N\}$,  and a sequence 
of unitary operators $U_k: H_k\to \tilde H_{\sigma(k)}$, $k=1,\dots, N$,  such that 
$$
\tilde\Gamma_{\sigma(k)}(\theta(z))=U_k\Gamma_k(z)U_k^{-1}, \qquad z\in Z, \quad k=1,\dots, N.  
$$
The following result completes the classification picture: 

\begin{thm}\label{inThm2} Let $S$ and $\tilde S$ be two reduced operator systems of the same 
dimension $d$ and having the same set of integer invariants.  Let $Z=\mathbb C^d$ 
and let ${\bf \Gamma}$ and ${\bf \tilde\Gamma}$ be two $N$-tuples of 
maps satisfying (i)--(iii) 
such that $S\cong S_{\bf \Gamma}$ and $\tilde S\cong S_{\bf \tilde\Gamma}$.  
Then $S$ and $\tilde S$ are isomorphic operator systems 
iff the two parameterizing sequences $\bf \Gamma$ and 
$\bf\tilde \Gamma$ are equivalent.  
\end{thm}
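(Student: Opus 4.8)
The plan is to prove both implications, the substantive one being necessity. Since isomorphism of operator systems is an equivalence relation, $S\cong\tilde S$ holds exactly when $S_{\bf\Gamma}\cong S_{\bf\tilde\Gamma}$, so I would reduce at once to showing that $S_{\bf\Gamma}$ and $S_{\bf\tilde\Gamma}$ are isomorphic operator systems if and only if $\bf\Gamma$ and $\bf\tilde\Gamma$ are equivalent parameterizing sequences. Sufficiency is contained in the remarks preceding the theorem: conjugating each $\Gamma_k$ by a unitary, permuting the component maps, and precomposing the whole sequence with a unit-preserving $*$-automorphism of $Z$ each leaves the isomorphism class of $S_{\bf\Gamma}$ unchanged, so composing these operations shows that equivalent sequences yield isomorphic operator systems.

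For the converse I would start with an operator system isomorphism $\phi\colon S_{\bf\Gamma}\to S_{\bf\tilde\Gamma}$ and first promote it to a $*$-isomorphism $\Phi\colon C^*(S_{\bf\Gamma})\to C^*(S_{\bf\tilde\Gamma})$ extending it. This is the step where reducedness is essential: because the boundary ideals of both systems vanish, each generated $C^*$-algebra coincides with its $C^*$-envelope, and the functoriality of the $C^*$-envelope (developed through the theory of boundary representations in \cite{arvChoq} and \cite{arvChoq2}) guarantees that the complete order isomorphism $\phi$ extends to a $*$-isomorphism $\Phi$ of the generated $C^*$-algebras. I expect this extension step to be the main obstacle; everything afterward is bookkeeping with finite-dimensional $C^*$-algebras.

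The next step is to apply the structure theory of $*$-isomorphisms between finite-dimensional $C^*$-algebras. By Theorem \ref{inThm1} we have $C^*(S_{\bf\Gamma})=\mathcal B(H_1)\oplus\cdots\oplus\mathcal B(H_N)$ and $C^*(S_{\bf\tilde\Gamma})=\mathcal B(\tilde H_1)\oplus\cdots\oplus\mathcal B(\tilde H_N)$, so $\Phi$ must carry minimal central projections to minimal central projections; hence there are a permutation $\sigma$ of $\{1,\dots,N\}$ and unitaries $U_k\colon H_k\to\tilde H_{\sigma(k)}$, with $\dim H_k=\dim\tilde H_{\sigma(k)}$, such that the $\sigma(k)$-th component of $\Phi(T_1\oplus\cdots\oplus T_N)$ equals $U_kT_kU_k^{-1}$. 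I would then define $\theta:=(\bigoplus_j\tilde\Gamma_j)^{-1}\circ\phi\circ(\bigoplus_k\Gamma_k)$ and check, using the faithfulness hypothesis (ii), that $\bigoplus_k\Gamma_k$ and $\bigoplus_j\tilde\Gamma_j$ are linear isomorphisms of $Z$ onto $S_{\bf\Gamma}$ and $S_{\bf\tilde\Gamma}$, so that $\theta$ is a linear automorphism of $Z$; it preserves the involution because $\phi$ and all the parameterizing maps do, and it fixes $\mathbf 1$ because $\phi$ is unital while each parameterizing map sends $\mathbf 1$ to the identity operator. Hence $\theta$ is a unit-preserving $*$-automorphism of $Z$.

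Finally I would combine the two descriptions of $\phi$: for $z\in Z$, applying $\Phi$ to $\bigoplus_k\Gamma_k(z)\in S_{\bf\Gamma}$ and reading off the $\sigma(k)$-th component yields $U_k\Gamma_k(z)U_k^{-1}$, while $\phi(\bigoplus_k\Gamma_k(z))=\bigoplus_j\tilde\Gamma_j(\theta(z))$ by the definition of $\theta$, whose $\sigma(k)$-th component is $\tilde\Gamma_{\sigma(k)}(\theta(z))$. Equating these gives $\tilde\Gamma_{\sigma(k)}(\theta(z))=U_k\Gamma_k(z)U_k^{-1}$ for all $z\in Z$ and all $k$, which is exactly the relation defining equivalence of $\bf\Gamma$ and $\bf\tilde\Gamma$, completing the necessity direction.
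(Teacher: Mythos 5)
Your proposal is correct and follows essentially the same route as the paper: the extension step you flag is exactly Theorem \ref{pcThm2} (the key property of reduced systems), after which the paper likewise uses the permutation-plus-unitaries structure of $*$-isomorphisms of finite dimensional $C^*$-algebras and the faithfulness property (ii) to produce the unit-preserving $*$-automorphism of $Z$ (your $\theta$, the paper's $\alpha$) satisfying $\tilde\Gamma_{\sigma(k)}(\theta(z))=U_k\Gamma_k(z)U_k^{-1}$. The only cosmetic difference is that you define $\theta$ explicitly as a composition of the parameterizations with $\phi$, whereas the paper extracts it from the equality of the two operator spaces; these are the same argument.
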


\begin{rem}[About the term ``classification"]  The 
combined statements of Theorems \ref{inThm1} and \ref{inThm2} amount to 
a classification of reduced operator systems that generate finite dimensional 
\cstar s.  It seems appropriate to offer some support for that claim,   
since the invariants of this classification, namely equivalence classes 
of parameterizing sequences ${\bf\Gamma}=(\Gamma_1,\dots, \Gamma_N)$ that satisfy 
properties (i), (ii) and (iii), are somewhat unusual.  

Consider first the 
simplest case $N=1$.  Here we have a single self adjoint 
linear map $\Gamma$ from the parameter 
space $Z=\mathbb C^d$ to operators on a finite dimensional Hilbert space $H$ 
such that $\Gamma(Z)$ is an irreducible set of operators, and which carries 
the unit of $Z$ to the identity operator.  Such a map $\Gamma$ automatically 
satisfies property (i), 
property (ii) can obviously be arranged, and property (iii) is 
satisfied vacuously.  Given a second map $\tilde\Gamma: Z\to \mathcal B(\tilde H)$ 
with that property, 
then $\Gamma$ and $\tilde\Gamma$ are equivalent iff there is a unitary 
operator $U: H\to \tilde H$ and an automorphism $\theta$ of the unital $*$-structure 
of $Z$ such that 
$$
\tilde\Gamma(\theta(z))=U\Gamma(z)U^{-1},\qquad z\in Z.  
$$
Thus, in the case $N=1$, Theorems \ref{inThm1} and \ref{inThm2} make 
the assertion that {\em two irreducible operator systems are isomorphic as operator 
systems iff they are unitarily equivalent}.  One can appreciate the content of 
that statement by considering that it has the following implication for two dimensional 
operator spaces: Given two 
irreducible $n\times n$ matrices $a$ and $b$, the map 
$$
\lambda\cdot\mathbf 1+\mu\cdot a\mapsto \lambda\cdot \mathbf 1+\mu\cdot b, 
\qquad \lambda,\mu\in \mathbb C
$$
is completely isometric iff the operators $a$ and $b$ are unitarily equivalent.  

In more 
complex situations where $N\geq 2$, the assertion is that a) every 
operator system is made up of irreducible ones with $N=1$, 
and b) there is no obstruction to assembling the irreducible pieces into a larger operator 
system other than the requirements of 
the strong separation property (iii) above.  In particular, the notion 
of {\em equivalence} for parameter sequences ${\bf\Gamma}=(\Gamma_1,\dots, \Gamma_N)$ 
primarily involves conditions on the individual 
coordinate maps $\Gamma_k$, with no 
interaction between different coordinates.  
\end{rem}

\section{Brief on the noncommutative Choquet boundary}\label{S:pc}

A {\em boundary representation} 
for an operator system $S\subseteq C^*(S)$ is an irreducible representation 
$\pi: C^*(S)\to \mathcal B(H)$ with the property that the only UCP map 
$\phi: C^*(S)\to \mathcal B(H)$ that agrees with $\pi$ on $S$ is $\pi$ itself.  
It was shown in \cite{arvChoq} that every separable operator system has 
sufficiently many boundary representations; and when compact operators are involved, 
it was shown in \cite{arvChoq2} that boundary representations are the 
noncommutative counterparts of peak points of function systems.  
In this section  we summarize that material 
in a form suitable for the analysis of operator systems 
in matrix algebras, referring the reader to \cite{arvChoq} and 
\cite{arvChoq2} for more 
detail and history.  While it is true that these results 
can be simplified very substantially for operator systems in 
matrix algebras, much of the subtlety persists even in the finite dimensional context
(see Remark \ref{pcRem1} below).  
Consequently, we have not attempted to make the following discussion of matrix 
algebras and their operator systems self-contained.  We now summarize 
some general results of \cite{arvSubalgI}, \cite{arvChoq} and \cite{arvChoq2} in the form 
we require.

Much of the discussion to follow rests on the main 
result of \cite{arvChoq} (Theorem 7.1), which we repeat here for reference: 

\begin{thm}\label{pcThm1} Every separable operator system $S\subseteq C^*(S)$  
has sufficiently many boundary representations 
in the sense that for every $n\geq 1$ and every $n\times n$ matrix 
$(s_{ij})$ with components $s_{ij}\in S$, one has 
\begin{equation}\label{pcEq1}
\|(s_{ij})\|=\sup_{\pi}\|(\pi(s_{ij}))\|, 
\end{equation}
the supremum on the right taken over all boundary representations $\pi$ for $S$. 
\end{thm}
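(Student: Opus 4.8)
The plan is to produce a single $*$-representation $\rho$ of $C^*(S)$ which is completely isometric on $S$ and which disintegrates as a direct integral of boundary representations; granting this, \eqref{pcEq1} follows immediately. Realize $C^*(S)$ faithfully on a separable Hilbert space $H_0$, and let $\iota\colon S\hookrightarrow\mathcal B(H_0)$ be the inclusion, a UCP map. By the dilation theory of UCP maps (see, e.g., \cite{paulsenBk2}), $\iota$ admits a \emph{maximal} UCP dilation $\phi\colon S\to\mathcal B(\tilde H_0)$: thus $H_0\subseteq\tilde H_0$, $P_{H_0}\phi(s)P_{H_0}=s$ for all $s\in S$, and $\phi$ has no nontrivial dilations; since $S$ is separable one can carry this out with $\tilde H_0$ separable. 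Let $\rho\colon C^*(S)\to\mathcal B(\tilde H_0)$ be a UCP extension of $\phi$. If $\rho=V^*\pi(\cdot)V$ is a minimal Stinespring representation, then $\pi|_S$ is a dilation of $\phi$, so by maximality $\tilde H_0$ reduces $\pi|_S$; since $S$ generates $C^*(S)$, it reduces $\pi$, whence by minimality $V$ is onto and $\rho=\pi$ is a $*$-homomorphism. As a $*$-homomorphism of $C^*(S)$ is determined by its restriction to the generating set $S$, $\rho$ is the \emph{only} UCP map on $C^*(S)$ extending $\rho|_S$; that is, $\rho$ has the unique extension property for $S$. Finally, $P_{H_0}\rho(s)P_{H_0}=s$ on $S$ together with complete contractivity of $\rho$ shows $\rho$ is completely isometric on $S$.

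Next, disintegrate $\rho\cong\int^{\oplus}_{\Omega}\rho_\omega\,d\mu(\omega)$ into \emph{irreducible} representations of $C^*(S)$, possible since $C^*(S)$ and $\tilde H_0$ are separable. The crux is the claim that \emph{for $\mu$-almost every $\omega$ the representation $\rho_\omega$ has the unique extension property for $S$}, hence is a boundary representation. If this fails, there is a set $E$ with $\mu(E)>0$ such that for each $\omega\in E$ there is a UCP map $\psi_\omega\neq\rho_\omega$ on $C^*(S)$ with $\psi_\omega|_S=\rho_\omega|_S$. Partitioning $\Omega$ according to the value of $\dim H_\omega$, we may assume the fibre dimension is constant, so the fibres are a single separable $H$; the collection of UCP maps $C^*(S)\to\mathcal B(H)$ is a standard Borel space in the point-weak$^*$ topology, and $\{(\omega,\psi)\colon\psi|_S=\rho_\omega|_S,\ \psi\neq\rho_\omega\}$ is Borel. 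A measurable selection theorem then furnishes a measurable map $\omega\mapsto\psi_\omega$ on $E$; setting $\psi_\omega=\rho_\omega$ for $\omega\notin E$ and forming the decomposable UCP map $\psi=\int^{\oplus}_{\Omega}\psi_\omega\,d\mu(\omega)$ on $C^*(S)$, we obtain $\psi|_S=\rho|_S$ while $\psi\neq\rho$ — indeed, testing against a countable dense subset of $C^*(S)$, the fibres already differ on a set of positive measure. This contradicts the unique extension property of $\rho$, so the claim holds.

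With the claim in hand, for every $n$ and every $(s_{ij})\in M_n(S)$,
$$
\|(s_{ij})\|=\|(\rho(s_{ij}))\|=\operatorname{ess\,sup}_{\omega}\|(\rho_\omega(s_{ij}))\|\le\sup_{\pi}\|(\pi(s_{ij}))\|\le\|(s_{ij})\|,
$$
the supremum over all boundary representations $\pi$ for $S$: the first equality is complete isometry of $\rho$ on $S$, the second is the norm of a decomposable operator, the third uses that almost every $\rho_\omega$ is a boundary representation, and the last is complete contractivity of $*$-homomorphisms. Hence equality holds throughout, which is \eqref{pcEq1}. I expect the measurable-selection step — equipping the varying fibres $\mathcal B(H_\omega)$ with a usable Borel structure and choosing the competing maps $\psi_\omega$ measurably — to be the main obstacle, and it is precisely where separability of $S$ is indispensable; a secondary point requiring care is arranging the maximal dilation to act on a separable Hilbert space, without which the disintegration into irreducibles is unavailable.
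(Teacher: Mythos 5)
This theorem is not proved in the present paper; it is quoted verbatim as Theorem 7.1 of \cite{arvChoq}, so there is no internal argument to compare against. Your sketch follows essentially the same route as the proof in that cited paper: dilate the completely isometric inclusion of $S$ to a maximal UCP map, observe that maximality yields a representation $\rho$ of $C^*(S)$ with the unique extension property that is completely isometric on $S$, disintegrate $\rho$ into irreducibles, and show by a measurable-selection argument that almost every fibre is a boundary representation. Two caveats: the steps you defer --- producing the maximal dilation on a \emph{separable} space and, above all, the almost-everywhere unique extension property of the fibres --- are precisely where nearly all of the work in \cite{arvChoq} lies (there maximality is recast in terms of countably many scalar conditions exactly so that the selection argument can be run), and your inference ``$\rho$ is a $*$-homomorphism, hence the only UCP extension of $\rho|_S$'' should be phrased as: the Stinespring/maximality argument applies to an \emph{arbitrary} UCP extension of $\rho|_S$, showing each one is a $*$-homomorphism, and $*$-homomorphisms agreeing on the generating set $S$ coincide.
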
 

In Theorem 2.2.3 of \cite{arvSubalgI}, it was shown that in all cases where sufficiently many 
boundary representations exist, the boundary ideal is the intersection of 
the kernels of all boundary representations.  This leads to the following characterization 
of reduced operator systems in matrix algebras: 

\begin{cor}\label{pcCor1} An operator system $S\subseteq \mathcal B(H)$ acting on a finite 
dimensional Hilbert space $H$ is reduced iff every irreducible representation 
of $C^*(S)$ is a boundary representation for $S$.  
\end{cor}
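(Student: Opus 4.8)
The plan is to deduce the corollary directly from Theorem \ref{pcThm1}, the structure theorem for finite-dimensional \cstar s, and Theorem~2.2.3 of \cite{arvSubalgI}. Since $H$ is finite dimensional, $S$ is finite dimensional and hence separable, so Theorem \ref{pcThm1} applies: $S$ has sufficiently many boundary representations. By Theorem~2.2.3 of \cite{arvSubalgI}, the boundary ideal $K$ of $S$ is then the intersection $\bigcap_\pi \ker\pi$ taken over all boundary representations $\pi$ of $S$. Thus $S$ is reduced precisely when $\bigcap_\pi\ker\pi=\{0\}$, and the whole task reduces to showing that this vanishing condition is equivalent to the assertion that every irreducible representation of $C^*(S)$ is a boundary representation.

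To see the equivalence, I would invoke the fact that $C^*(S)$, being a finite-dimensional \cstar\ containing the identity, decomposes as a central direct sum $C^*(S)=A_1\oplus\cdots\oplus A_M$ with each $A_j\cong\mathcal B(H_j)$, and that the compressions $\pi_j\colon C^*(S)\to A_j$ are, up to unitary equivalence, exactly the distinct irreducible representations of $C^*(S)$; here $\ker\pi_j=\bigoplus_{i\neq j}A_i$. Because the property of being a boundary representation is invariant under unitary equivalence, one may let $B\subseteq\{1,\dots,M\}$ be the set of indices $j$ for which $\pi_j$ is a boundary representation, and since the kernel of an irreducible representation depends only on its equivalence class, the (a priori infinite) intersection collapses to $\bigcap_\pi\ker\pi=\bigcap_{j\in B}\ker\pi_j=\bigoplus_{i\notin B}A_i$. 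As each $A_i$ is nonzero, this intersection is $\{0\}$ if and only if $B=\{1,\dots,M\}$, i.e.\ if and only if every irreducible representation of $C^*(S)$ is a boundary representation. Combining this with the previous paragraph yields the corollary in both directions.

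There is essentially no hard analytic step in this argument: the substance is entirely inherited from \cite{arvChoq} (via Theorem \ref{pcThm1}) and from \cite{arvSubalgI}, and the corollary merely transcribes it into the language of finite-dimensional \cstar s. The only points that require care are (a) observing that finite-dimensionality supplies separability, so that Theorem \ref{pcThm1} and Theorem~2.2.3 of \cite{arvSubalgI} are available; (b) recording that boundary representations are stable under unitary equivalence, which is what makes it meaningful to ask whether the abstract irreducible representations $\pi_j$ are boundary representations; and (c) the elementary observation about finite direct sums of matrix algebras that a sub-family of the kernels $\ker\pi_j$ has trivial intersection only when it is the full family. If anything plays the role of the ``main obstacle,'' it is simply making sure these bookkeeping points are stated cleanly so that the reduction to \cite{arvSubalgI} is rigorous.
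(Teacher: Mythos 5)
Your argument is correct, and it differs from the paper's proof in how the harder direction is organized. You derive \emph{both} implications from a single identity: finite dimensionality gives separability, Theorem \ref{pcThm1} gives sufficiently many boundary representations, Theorem~2.2.3 of \cite{arvSubalgI} then identifies the boundary ideal with $\bigcap_{j\in B}\ker\pi_j=\bigoplus_{i\notin B}A_i$ (where $B$ indexes the boundary representations among the central summands), and the corollary becomes the bookkeeping observation that this sum vanishes iff $B$ is everything. The paper uses the identification of the boundary ideal with the intersection of boundary-representation kernels only for the easy direction (all irreducibles boundary $\Rightarrow$ reduced); for the converse it argues by contradiction directly from Theorem \ref{pcThm1}: if some $\pi_k$ failed to be a boundary representation, every boundary representation would annihilate the nonzero ideal $K=\bigcap_{j\neq k}\ker\pi_j$, so the norm formula (\ref{pcEq1}) would force the quotient map $C^*(S)\to C^*(S)/K$ to be completely isometric on $S$, contradicting triviality of the boundary ideal. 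Your route is more economical but leans entirely on the cited Theorem~2.2.3, whereas the paper's converse exhibits the mechanism (a nonzero ideal on which the quotient is completely isometric) explicitly, which is the same device reused later in Section \ref{S:nr}. The points you flag---stability of the boundary property under unitary equivalence, collapsing the intersection to the finitely many $\ker\pi_j$, and the convention when $B$ could be empty---are exactly the right ones to make the reduction rigorous.
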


\begin{proof}
If every irreducible representation is a boundary representation for $S$, then the result 
from \cite{arvSubalgI} cited above implies that the boundary ideal is trivial.  

Conversely, assume that the boundary ideal for $S$ is trivial and 
consider the decomposition (\ref{inEq0})  
\begin{equation}\label{pcEq2}
C^*(S)=A_1\oplus\cdots\oplus A_N, \qquad A_k\cong \mathcal B(H_k)
\end{equation}
of the \cstar\ generated by $S$ into a direct sum of full matrix algebras.  If one 
of the associated irreducible representations, say $\pi_k: C^*(S)\to \mathcal B(H_k)$, 
$1\leq k\leq N$,   
were {\em not} a boundary representation, then every boundary representation 
would annihilate the ideal 
$$
K=\bigcap_{j\neq k}\ker\pi_j\neq \{0\}
$$
and therefore would correspond to 
an irreducible representation of the quotient \cstar\ $C^*(S)/K$.  In that event, Theorem 
\ref{pcThm1} above implies that the 
quotient map $x\in C^*(S)\mapsto\dot x\in C^*(S)/K$ restricts to a  
completely isometric map on $S$, contradicting the hypothesis that $S$ is reduced.   
\end{proof}

\begin{cor}\label{pcCor2}
Every irreducible operator system $S\subseteq \mathcal B(H)$ acting on a finite dimensional 
Hilbert space is reduced in the sense of Definition \ref{inDef1}, and 
the identity representation of $\mathcal B(H)$ is a boundary 
representation for $S$.  
\end{cor}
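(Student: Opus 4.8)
The plan is to reduce everything to the simplicity of the ambient algebra and then invoke Corollary \ref{pcCor1}. First I would observe that saying $S$ is an irreducible operator system means exactly that $S$ is an irreducible set of operators on $H$, which on a finite dimensional Hilbert space is equivalent to $C^*(S)=\mathcal B(H)$ (this is the content of the remark following property (i), that irreducibility amounts to $C^*(\Gamma_k(Z))=\mathcal B(H_k)$). Thus the decomposition (\ref{inEq0}) collapses to the single summand $C^*(S)=\mathcal B(H)$, which is a \emph{simple} \cstar: its only closed two-sided ideals are $\{0\}$ and $\mathcal B(H)$.

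Next I would pin down the boundary ideal $K$ of $S$ among those two candidates. By its very definition, $K$ is the largest closed two-sided ideal of $C^*(S)$ for which the quotient map $x\mapsto\dot x\in C^*(S)/K$ is completely isometric on $S$. If $K=\mathcal B(H)$, then $C^*(S)/K=\{0\}$, so the quotient map kills the identity operator $\mathbf 1\in S$; but $\|\mathbf 1\|=1\neq 0=\|\dot{\mathbf 1}\|$, so the restriction of the quotient map to $S$ is not isometric, a contradiction. Hence $K=\{0\}$, which is precisely the assertion that $S$ is reduced in the sense of Definition \ref{inDef1}.

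Finally, with $S$ now known to be reduced, Corollary \ref{pcCor1} applies and tells us that every irreducible representation of $C^*(S)=\mathcal B(H)$ is a boundary representation for $S$. Since $\mathcal B(H)$ with $\dim H<\infty$ has, up to unitary equivalence, a single irreducible representation, namely the identity representation, it follows that the identity representation of $\mathcal B(H)$ is a boundary representation for $S$, completing the proof. I do not expect a genuine obstacle here; the only step that deserves a moment's care is the first one — recognizing that ``irreducible operator system'' forces $C^*(S)$ to be a single full matrix algebra and hence simple — after which the conclusion is formal, resting entirely on Corollary \ref{pcCor1} and the uniqueness of the irreducible representation of a matrix algebra.
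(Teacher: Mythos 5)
Your proposal is correct and follows essentially the same route as the paper: identify $C^*(S)=\mathcal B(H)$ from irreducibility, deduce reducedness from simplicity of $\mathcal B(H)$ (the unit rules out the ideal $\mathcal B(H)$ itself), and then use uniqueness of the irreducible representation of a matrix algebra. The only cosmetic difference is that you obtain the boundary-representation claim by citing Corollary \ref{pcCor1}, whereas the paper appeals directly to Theorem \ref{pcThm1}; since the relevant direction of Corollary \ref{pcCor1} is itself proved from Theorem \ref{pcThm1}, this is the same argument.
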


\begin{proof} Since $S$ is an irreducible self adjoint family of operators that 
contains $\mathbf 1$, the 
double commutant theorem implies  
$C^*(S)=\mathcal B(H)$.  Every operator system that generates a simple \cstar\ must 
be reduced, since the only candidate for the boundary ideal is $\{0\}$.  In this 
case, the identity representation of 
$C^*(S)$ is, up to equivalence, the only irreducible representation, 
so that Theorem \ref{pcThm1} implies that it must be a boundary representation.  
\end{proof}

\begin{rem}[Fixed points of UCP maps on matrix algebras]\label{pcRem1}  
Corollary 
\ref{pcCor2} is equivalent to the following assertion: If $\phi: \mathcal B(H)\to \mathcal B(H)$ 
is a UCP map on a matrix algebra that fixes an irreducible set $S$ of operators, 
then $\phi$ is the identity map.  When $S=\{a\}$ consists 
of a single irreducible operator $a$, for example, there appears to 
be no direct route to a proof of the assertion.  
In the special case where $\phi$ preserves the tracial state of $\mathcal B(H)$, 
a straightforward application of the Schwarz inequality 
implies that the operator system $S=\{x\in\mathcal B(H): \phi(x)=x\}$ 
is closed under operator multiplication, and that implies $\phi$ is 
the identity map when $S$ is irreducible.  

But in general, the best one can say is that 
there is a UCP idempotent $E$ with the same fixed elements - 
recall that the set of accumulation points 
of the sequence $\phi^k$, $k=1,2,\dots$ contains a unique idempotent $E$.  
Once one has such an $E$ one can introduce the Choi-Effros multiplication \cite{ChE1} 
on $S$ to make it into a $C^*$-algebra.  However, since the Choi-Effros multiplication need 
not be the ambient multiplication in $\mathcal B(H)$, this fails to address the 
issue.  
Thus, Theorem \ref{pcThm1} and Corollary \ref{pcCor2} make 
significant assertions even for operator systems 
in matrix algebras.  
\end{rem}

Finally, we recall the key property of reduced operator systems in 
general, which follows from Theorem 2.2.5 of \cite{arvSubalgI} together with Theorem \ref{pcThm1}: 

\begin{thm}\label{pcThm2}
Let $S_1\subseteq C^*(S_1)$ and $S_2\subseteq C^*(S_2)$ be two reduced separable operator 
systems.  Then every isomorphism of operator systems $\theta: S_1\to S_2$ extends 
uniquely to a $*$-isomorphism of \cstar s $\tilde\theta: C^*(S_1)\to C^*(S_2)$.  
\end{thm}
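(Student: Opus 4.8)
\noindent\emph{Plan of proof.}\quad The strategy is to invoke the universal property of the $C^*$-envelope. Recall from the introduction that an isomorphism of operator systems $\theta:S_1\to S_2$ is exactly a unit-preserving completely isometric linear map of $S_1$ onto $S_2$; its inverse $\theta^{-1}:S_2\to S_1$ is then of the same kind. Since $S_1$ is separable, so is the \cstar\ $C^*(S_1)$ that it generates, and likewise for $S_2$ and $C^*(S_2)$. Hence Theorem~\ref{pcThm1} shows that $S_1$ and $S_2$ each have sufficiently many boundary representations, which is the condition under which Theorem~2.2.5 of \cite{arvSubalgI} delivers the universal property of the $C^*$-envelope. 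Because $S_1$ and $S_2$ are reduced, their boundary ideals are $\{0\}$, so the $C^*$-envelope of $S_i$ may be taken to be $C^*(S_i)$ itself.

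With those facts in hand, I would proceed as follows. Composing $\theta$ with the inclusion $S_2\subseteq C^*(S_2)$ yields a unit-preserving complete isometry of $S_1$ into $C^*(S_2)$ whose range generates $C^*(S_2)$, so that $C^*(S_2)$ (together with this embedding) is a $C^*$-cover of $S_1$; symmetrically, $C^*(S_1)$ with the embedding $\theta^{-1}$ is a $C^*$-cover of $S_2$. Applying the universal property of the $C^*$-envelope to these two covers---and recalling that the $C^*$-envelope of $S_i$ is $C^*(S_i)$---one obtains surjective unital $*$-homomorphisms $\tilde\theta:C^*(S_1)\to C^*(S_2)$ and $\tilde\rho:C^*(S_2)\to C^*(S_1)$ that restrict, respectively, to $\theta$ on $S_1$ and to $\theta^{-1}$ on $S_2$. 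The composite $\tilde\rho\circ\tilde\theta$ is then a $*$-endomorphism of $C^*(S_1)$ that is the identity on $S_1$; since $S_1$ is self-adjoint and generates $C^*(S_1)$, this endomorphism fixes the norm-dense $*$-subalgebra generated by $S_1$, and being norm-continuous it equals $\id_{C^*(S_1)}$. In the same way $\tilde\theta\circ\tilde\rho=\id_{C^*(S_2)}$, so $\tilde\theta$ is a $*$-isomorphism extending $\theta$. Uniqueness needs no separate argument: two $*$-homomorphisms on $C^*(S_1)$ that agree on the generating set $S_1$ agree on the $*$-subalgebra it generates, hence, by continuity, everywhere.

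The only substantial ingredient here is the universal property invoked above, and essentially all of its weight rests on the existence of sufficiently many boundary representations---that is, on Theorem~\ref{pcThm1}, the main result of \cite{arvChoq}. That is the genuine obstacle; everything after it is a formal diagram chase, and the separability hypothesis in the statement is present only to bring $C^*(S_1)$ and $C^*(S_2)$ within the scope of Theorem~\ref{pcThm1}.
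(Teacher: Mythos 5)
Your proof is correct and takes essentially the paper's route: the paper's "proof" is just the citation of Theorem 2.2.5 of \cite{arvSubalgI} together with Theorem \ref{pcThm1}, and your argument unpacks exactly those ingredients---sufficiently many boundary representations (available by separability) giving the $C^*$-envelope and its universal property, plus the observation that reducedness makes $C^*(S_i)$ its own envelope---into the standard two-cover diagram chase with uniqueness following from density of the $*$-algebra generated by $S_1$. Nothing further is needed.
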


\section{Peaking and boundary representations}\label{S:pr}

Let $X$ be a compact Hausdorff space.  A {\em function system} is a linear 
subspace of $C(X)$ that separates points, is closed under complex conjugation, 
and contains the constants.  
A {\em peak point} for a function system $S\subseteq C(X)$ is a point 
$x\in X$ with the property that there is a ``peaking function" $f\in S$ 
(which of course depends on $x$) such that 
$$
|f(x)| > |f(y)|, \qquad \forall\ y\in X, \quad y\neq x.  
$$ 
Since function systems are spanned by their real functions, one can reformulate 
this condition so as to get rid of absolute values; but the form given is the 
one that we choose for the following discussion.   For separable function algebras, 
a theorem of Bishop and de Leeuw \cite{BdL} asserts that 
the peak points are exactly the points of the Choquet boundary, while for 
the more general separable 
function systems, the peak points are dense in the Choquet boundary (see \cite{phelps} or \cite{phelps2}, 
pp. 39--40 and Corollary 8.4).  
 
We now recall the definition of peaking representations 
from \cite{arvChoq2}.  Given an operator system $S\subseteq C^*(S)$, 
every representation 
$\pi$ of $C^*(S)$ on a Hilbert space $K$ gives rise to a representation 
of the matrix hierarchy over $C^*(S)$, in which for an $n\times n$ matrix 
$(x_{ij})$ of elements of $C^*(S)$, the $n\times n$ operator matrix 
$(\pi(x_{ij}))$ represents an operator on the direct sum $n\cdot K$ of $n$ 
copies of $K$, and the map $(x_{ij})\mapsto (\pi(x_{ij}))\in \mathcal B(n\cdot K)$ 
is a representation of $M_n(C^*(S))$ on $n\cdot K$.

\begin{defn}\label{prDef1}
An irreducible representation $\pi: C^*(S)\to \mathcal B(K)$ is said to be 
{\em peaking} for $S$ if there is an $n=1,2,\dots$ and an 
$n\times n$ matrix $(s_{ij})\in M_n(S)$ with entries in $S$ 
that has the following property:  
For every irreducible representation $\sigma$ of $C^*(S)$ that is 
inequivalent to $\pi$, one has 
\begin{equation}\label{prEq1}
\|(\pi(s_{ij}))\|>\|(\sigma(s_{ij}))\|.  
\end{equation}
Such an operator matrix $(s_{ij})\in M_n(S)$ is called a {\em peaking operator} for $\pi$.  
\end{defn}

Actually, in \cite{arvChoq2} it was necessary to 
employ a stronger variant of this concept, called {\em strong peaking}.  
That stronger property 
will not be required 
here since the \cstar s of this paper are finite dimensional, with only 
a finite number of inequivalent irreducible representations.  We will make 
use the following special case of a general result in \cite{arvChoq2}:  

\begin{thm}\label{prThm1}
Let $S$ be an operator system that generates a finite dimensional \cstar \ 
$C^*(S)$.  An irreducible representation of $C^*(S)$ is a boundary representation 
for $S$ iff it is peaking for $S$.  
\end{thm}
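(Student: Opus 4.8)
The plan is to establish both implications by relating the abstract boundary condition to the concrete, matrix-norm inequality defining a peaking representation, using the finiteness of the spectrum of $C^*(S)$ in an essential way.

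For the implication ``peaking $\Rightarrow$ boundary,'' suppose $\pi:C^*(S)\to\mathcal B(K)$ is peaking with peaking matrix $(s_{ij})\in M_n(S)$, so that $\|(\pi(s_{ij}))\|>\|(\sigma(s_{ij}))\|$ for every irreducible $\sigma\not\cong\pi$. Let $\phi:C^*(S)\to\mathcal B(K)$ be a UCP map agreeing with $\pi$ on $S$; we must show $\phi=\pi$. By Stinespring, dilate $\phi$ to a representation $\rho$ of $C^*(S)$ on a larger space $\tilde K$ with $\phi(\cdot)=P_K\rho(\cdot)|_K$. Since $C^*(S)$ is finite dimensional, $\rho$ decomposes as a finite direct sum of irreducibles, $\rho\cong\bigoplus_i m_i\,\sigma_i$. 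Now compute with the peaking matrix: on one hand $\|(\phi(s_{ij}))\|=\|(\pi(s_{ij}))\|$ since $\phi$ and $\pi$ agree on $S$; on the other hand $\|(\phi(s_{ij}))\|\le\|(\rho(s_{ij}))\|=\max_i\|(\sigma_i(s_{ij}))\|$. If any $\sigma_i\not\cong\pi$, the strict peaking inequality forces $\|(\pi(s_{ij}))\|>\|(\sigma_i(s_{ij}))\|$, and one needs to leverage this to rule out that summand. The cleanest route is to pass to the idempotent $E$ in the closure of $\{\phi^k\}$ (as in Remark \ref{pcRem1}): $E$ is UCP, agrees with $\pi$ on $S$ hence on $C^*(S)$ by a multiplicativity/rigidity argument, so $\pi$ is a $C^*$-direct summand of the dilation, which together with the strict norm inequality eliminates all other $\sigma_i$ and gives $\rho\cong m\cdot\pi$; then $\phi$ is a compression of $m\cdot\pi$ agreeing with $\pi$ on the multiplicative generators, forcing $\phi=\pi$. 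Since any boundary representation must in particular be irreducible, this shows peaking representations are boundary representations.

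For the converse, ``boundary $\Rightarrow$ peaking,'' suppose $\pi$ is a boundary representation for $S$ but, for contradiction, not peaking. Failure of the peaking condition means: for every $n$ and every $(s_{ij})\in M_n(S)$ there exists an irreducible $\sigma\not\cong\pi$ with $\|(\sigma(s_{ij}))\|\ge\|(\pi(s_{ij}))\|$. Because $C^*(S)$ has only finitely many equivalence classes of irreducibles, say $\pi=\pi_1,\pi_2,\dots,\pi_N$, a pigeonhole argument lets us fix a single $\sigma=\pi_j$, $j\ne1$, that works simultaneously for a spanning family of matrices over $S$ — more precisely, one argues that the set of matrices for which $\|(\pi(s_{ij}))\|>\|(\pi_j(s_{ij}))\|$ for all $j\ne1$ is either everything needed or, if empty for each individual $j$, one builds a block-diagonal ``super-matrix'' combining witnesses for all $j\ne1$, contradicting strict separation unless some $\pi_j$ dominates $\pi$ on all of $M_\infty(S)$. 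This means $\|(s_{ij})\|=\|(\rho'(s_{ij}))\|$ for all matrices over $S$, where $\rho'=\bigoplus_{j\ne1}\pi_j$ is the representation annihilating the nonzero ideal $K=\ker\rho'=\bigcap_{j\ne1}\ker\pi_j$ (note $K\ne\{0\}$ since $\pi_1$ is not among the summands). By Theorem \ref{pcThm1} applied via the general theory, or directly, the quotient map $C^*(S)\to C^*(S)/K$ is then completely isometric on $S$, so $K$ is contained in the boundary ideal; but $\pi$ annihilates $K$? No — $\pi=\pi_1$ does not annihilate $K$, so $\pi$ factors through no quotient by a boundary ideal containing $K$, and one derives that $\pi$ cannot be a boundary representation after all, or alternatively that $S$ is not reduced modulo the relevant ideal, contradicting that $\pi$ is boundary.

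The main obstacle will be the pigeonhole step in the converse: turning ``for each matrix there is \emph{some} dominating $\sigma$'' into ``there is \emph{one} $\sigma$ dominating on all matrices.'' This is not purely formal — it requires combining witnesses across the finitely many competitors $\pi_j$ into a single operator matrix (via direct sums of the $(s_{ij})$) and checking that strict peaking for $\pi$ against \emph{each} $\pi_j$ individually would then be achievable, so its failure pins down a uniform dominator; here one must be careful that taking direct sums of peaking candidates behaves correctly with respect to the operator norms $\|(\pi_k(s_{ij}))\|$, using that $\|\bigoplus_l (x^{(l)}_{ij})\| = \max_l \|(x^{(l)}_{ij})\|$ under $\pi_k$ for each fixed $k$. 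Once this uniformization is in hand, the identification of $K=\bigcap_{j\ne1}\ker\pi_j$ as a nonzero ideal on which the quotient is completely isometric, contradicting $\pi$ being a boundary representation (whose kernel cannot contain such a $K$ while still detecting the norm), closes the argument.
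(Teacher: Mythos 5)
Your forward direction (peaking $\Rightarrow$ boundary) has a genuine gap. After the Stinespring dilation and the norm computation with the peaking matrix, all you can conclude is that $\pi$ occurs among the irreducible summands of the dilation; to finish, you assert that the idempotent limit $E$ of $\{\phi^k\}$ ``agrees with $\pi$ on $S$ hence on $C^*(S)$ by a multiplicativity/rigidity argument.'' That assertion is exactly the statement that $\pi$ is a boundary representation, so the step is circular; moreover $\phi$ maps $C^*(S)$ into $\mathcal B(K)$ rather than into itself, so the iterates $\phi^k$ are not even defined without further (non-canonical, non-unital) identifications. The seriousness of the gap is clearest when $N=1$: there the peaking hypothesis is vacuous, and your argument would give an elementary proof that every UCP map of a matrix algebra fixing an irreducible operator system is the identity --- precisely the assertion that Remark \ref{pcRem1} singles out as having no known direct proof. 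The paper closes this direction differently: since $C^*(S)$ has only finitely many inequivalent irreducibles $\pi_1,\dots,\pi_N$, peaking is the finite condition $\|(\pi_1(s_{ij}))\|>\max_{k\geq 2}\|(\pi_k(s_{ij}))\|$, and the equivalence with being a boundary representation is quoted from Theorem 6.2 of \cite{arvChoq2}; the substance behind that citation in this direction is Theorem \ref{pcThm1} (existence of sufficiently many boundary representations): boundary representations norm $M_p(S)$, the supremum is attained because there are finitely many irreducibles, so some boundary representation $\omega$ satisfies $\|(\omega(s_{ij}))\|=\|(s_{ij})\|\geq\|(\pi(s_{ij}))\|$ on the peaking matrix, and the strict peaking inequality forces $\omega\cong\pi$. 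Your proposal never invokes Theorem \ref{pcThm1}, and without it (or something of comparable depth) this direction cannot be closed.

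Your converse direction is essentially right in outline but overcomplicated and incompletely justified. No pigeonhole or uniformization to a single dominating $\sigma$ is needed (and none is available): failure of peaking says precisely that $\|(\pi_1(s_{ij}))\|\leq\max_{j\neq 1}\|(\pi_j(s_{ij}))\|$ for every matrix over $S$, which already means the quotient map modulo $K=\bigcap_{j\neq 1}\ker\pi_j\neq\{0\}$ is completely isometric on $S$. The contradiction then requires the fact, quoted in the paper from Theorem 2.2.3 of \cite{arvSubalgI}, that the boundary ideal is the intersection of the kernels of the boundary representations --- so every boundary representation, in particular $\pi_1$, must annihilate $K$, which it does not. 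Your write-up gestures at this conclusion (``one derives that $\pi$ cannot be a boundary representation after all'') without supplying that justification, which is the one genuinely needed citation in this direction.
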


\begin{proof}
The decomposition (\ref{pcEq2}) of $C^*(S)$ into a central direct sum of 
matrix algebras implies that there are exactly $N$ mutually inequivalent 
irreducible representations $\pi_1,\dots,\pi_N$ of $C^*(S)$.  So for example, 
$\pi_1$ is peaking for $S$ iff there is a $p\geq 1$ and a $p\times p$ matrix 
$(s_{ij})$ over $S$ such that 
$$
\|(\pi_1(s_{ij}))\|>\max_{2\leq k\leq N}\|(\pi_k(s_{ij}))\|.  
$$
Theorem 6.2 of \cite{arvChoq2} implies that this assertion 
is equivalent 
to the assertion that $\pi_1$ is a boundary representation for $S$.  
\end{proof}

\section{Proof of Theorem \ref{inThm1}}\label{S:p1}

In this section we prove Theorem \ref{inThm1}.  
Choose an $N$-tuple of unit preserving self adjoint linear 
maps $\Gamma_k: Z\to \mathcal B(H_k)$, $k=1,\dots, N$, satisfying 
properties (i)--(iii) of Section \ref{S:in}, let ${\bf \Gamma}=(\Gamma_1,\dots, \Gamma_N)$ 
and consider the operator system  
$
S_{\bf \Gamma}\subseteq \mathcal B(H_1\oplus\cdots\oplus H_N)
$
defined by 
$$
S_{\bf \Gamma}=\{\Gamma_1(z)\oplus\cdots\oplus\Gamma_n(z): z\in Z\}.  
$$
We will show that 
\begin{equation}\label{p1Eq1}
C^*(S_{\bf \Gamma})=\mathcal B(H_1)\oplus\cdots\oplus\mathcal B(H_N), 
\end{equation}
that each of the visible irreducible representations $\pi_k: C^*(S)\to \mathcal B(H_k)$ 
$$
\pi_k(x_1\oplus\cdots\oplus x_k)=x_k, \qquad k=1,\dots, N
$$
is a boundary representation for $S_{\bf \Gamma}$, and we will deduce 
that $S_{\bf \Gamma}$ is a reduced operator system with the asserted integer 
invariants.  

To prove (\ref{p1Eq1}), note first that each of the representations $\pi_1,\dots, \pi_N$ 
of $C^*(S_{\bf \Gamma})$ is irreducible, since $\pi_k(C^*(S_{\bf \Gamma}))$ contains the 
irreducible space of operators $\Gamma_k(Z)\subseteq \mathcal B(H_k)$.  Moreover, the strong separation 
property (iii) implies that the representations $\pi_k$ are mutually inequivalent.  It 
follows that they are mutually disjoint; and since the identity representation of 
$C^*(S_{\bf \Gamma})$ is the direct sum 
$\pi_1\oplus\cdots\pi_N$,  it follows that the projection $p_k$ of $H_1\oplus\cdots\oplus H_N$ 
onto each summand $H_k$ belongs to $C^*(S_{\bf \Gamma})$.  Formula (\ref{p1Eq1}) follows.  

Finally, let $L: Z\to \mathcal B(H_1\oplus\cdots\oplus H_N)$ be the linear 
map 
$$
L(z)=\Gamma_1(z)\oplus\cdots\oplus \Gamma_N(z), \qquad z\in Z.  
$$
We have 
 $S_{\bf\Gamma}=L(Z)$ by definition of $S_{\bf\Gamma}$, and moreover  
$$
\Gamma_k(z)=\pi_k(L(z)), \qquad z\in Z, \quad k=1,\dots, N.  
$$
Using the latter formula, one finds that the strong separation property (iii) implies that 
each representation $\pi_k$ is peaking for $S_{\bf\Gamma}$.  Theorem \ref{prThm1} 
implies that each $\pi_k$ is a boundary representation for $S$, and since 
$\{\pi_1,\dots, \pi_N\}$ is a complete list of the irreducible representations of 
$C^*(S_{\bf\Gamma})$ up to equivalence, 
Corollary \ref{pcCor1} implies that $S_{\bf\Gamma}$ is reduced.

To prove the converse assertion of Theorem \ref{inThm1}, 
let $S\subseteq \mathcal B(H)$ be a reduced operator system acting 
on a finite dimensional Hilbert space $H$.  We have to  show that there is 
an $N$-tuple  ${\bf\Gamma}=(\Gamma_1,\dots, \Gamma_N)$ of linear maps 
with the stated properties such 
that $S$ and $S_{\bf\Gamma}$ are isomorphic operator systems.  

To do that, consider the natural decomposition of $C^*(S)$ 
$$
C^*(S)=A_1\oplus\cdots\oplus A_N, \qquad A_k\cong\mathcal B(H_k), \quad k=1,\dots, N  
$$
into finite dimensional subfactors.  
We first exhibit an appropriate parameterization for $S$ in terms 
of the parameter space $Z=\mathbb C^d$, $d=\dim S$.  For that we   
claim that there is a linear basis $s_1,\dots, s_d$ for $S$ consisting of self adjoint 
operators $s_k$ that satisfies $s_1+\cdots+s_d=\mathbf 1$.  Indeed, if we choose a 
$d-1$ dimensional subspace $S_0$ of $S$ such that $S_0^*=S_0$ and $\mathbf 1\notin S_0$, 
then we obtain such a basis by choosing a linear basis $s_1,\dots, s_{d-1}$ for $S_0$ 
consisting of self adjoint operators and setting $s_d=\mathbf 1-(s_1+\cdots+s_{d-1})$.  
Set $Z=\mathbb C^d$.  Then we have arranged 
that the linear map $L: Z\to S$ defined by 
$$
L(z_1,\dots, z_d)=z_1\cdot s_1+\cdots+ z_d\cdot s_d
$$
is linear isomorphism of vector spaces that preserves the 
$*$ operation and carries $(1,1,\dots, 1)$ to the identity operator of $S$.  

Let $\pi_k: C^*(S)\to \mathcal B(H_k)$ be the representation associated with 
the $k$th term in the decomposition (\ref{p1Eq1}) and define $\Gamma_k: Z\to \mathcal B(H_k)$ 
by 
\begin{equation}\label{p1Eq2}
\Gamma_k(z)=\pi_k(L(z)), \qquad z\in Z.  
\end{equation}
Each $\Gamma_k$ is a linear map that preserves the $*$-operation and maps 
the vector $(1,1,\dots, 1)\in Z$ to the operator $\mathbf 1_{H_k}$.  
We have $\Gamma_k(Z)=\pi_k(S)$, so that $C^*(\Gamma_k(Z))=\pi_k(C^*(S))=\mathcal B(H_k)$, 
and hence  $\Gamma_k(Z)$ is an irreducible operator system in $\mathcal B(H_k)$, 
$k=1,\dots, N$.  Note too 
that these maps $\Gamma_1,\dots,\Gamma_N$ satisfy property (ii) of Section \ref{S:in} since 
if $\Gamma_k(z)=0$ for every $k$, then $\pi_k(L(z))=0$ for every $k=1,\dots, N$, 
hence $L(z)=0$ since $\ker \pi_1\cap\cdots\cap \ker \pi_N=\{0\}$, and finally 
$z=0$ since $L$ is an isomorphism of vector spaces.  

We claim that the $N$-tuple ${\bf \Gamma}=(\Gamma_1,\dots, \Gamma_N)$ satisfies 
property (iii) of Section \ref{S:in}.  Indeed, by Corollary \ref{pcCor1}, every 
representation $\pi_k: C^*(S)\to \mathcal B(H_k)$ is a boundary representation for 
$S$ which,  by Theorem \ref{prThm1}, is a peaking representation for $S$.  Property 
(iii) now follows after one unravels that assertion through each of 
the parameterizations 
$\Gamma_k(z)=\pi_k(L(z))$, $k=1,\dots, N$, in terms of the basic map $L: Z\to S$.  

Thus if we form the operator system associated with the $N$-tuple 
${\bf\Gamma}=(\Gamma_1,\dots, \Gamma_N)$ 
$$
S_{\bf\Gamma}=
\{\Gamma_1(z)\oplus\cdots\oplus\Gamma_N(z): z\in Z\}\subseteq \mathcal B(H_1\oplus\cdots\oplus H_N),    
$$
then formula (\ref{p1Eq1}) implies that 
$$
C^*(S_{\bf\Gamma})=\mathcal B(H_1)\oplus\cdots\oplus \mathcal B(H_N).  
$$

Now for each $k=1,\dots, N$, the irreducible representation $\pi_k: A_k\to \mathcal B(H_k)$ 
is an isomorphism of \cstar s, so we can define a $*$-isomorphism of \cstar s 
$\pi: C^*(S)\to C^*(S_{\bf\Gamma})$ by way of 
$$
\pi(x_1\oplus\cdots\oplus x_N)=\pi_1(x_1)\oplus\cdots\oplus \pi_N(x_N), \quad x_k\in A_k, 
\quad k=1,\dots,N.  
$$
This isomorphism satisfies $\pi(L(z))=\Gamma_1(z)\oplus\cdots\oplus \Gamma_N(z)$ 
for all $z\in Z$, and therefore $\pi(S)=S_{\bf\Gamma}$.  Hence $S$ and $S_{\bf\Gamma}$ 
are isomorphic operator systems.

\section{Proof of Theorem \ref{inThm2}}\label{S:p2}

Let ${\bf\Gamma}=(\Gamma_1,\dots, \Gamma_N)$ and ${\bf \tilde\Gamma}=(\tilde\Gamma_1,\dots,\tilde\Gamma_N)$ 
be two parameterizing sequences of linear maps defined on $Z=\mathbb C^d$ 
of the same length  that satisfy (i), (ii),  (iii) of Section \ref{S:in}, 
and assume that  $S_{\bf \Gamma}$ and $S_{\bf \tilde\Gamma}$ 
are isomorphic operator systems.  We will show that ${\bf \Gamma}$ and 
${\bf\tilde\Gamma}$ are equivalent parameter sequences.  Since $S_{\bf\Gamma}$ and 
$S_{\bf\tilde\Gamma}$ are reduced operator systems, Theorem \ref{pcThm2} implies that there 
is a $*$-isomorphism $\theta: C^*(S_{\bf \Gamma})\to C^*(S_{\bf \tilde \Gamma})$ such that 
$\theta(S_{\bf\Gamma})=S_{\bf \tilde\Gamma}$.  After consideration of the central decompositions 
of these \cstar s 
into full matrix algebras 
$$
C^*(S_{\bf\Gamma})=A_1\oplus\cdots\oplus A_n, 
\qquad C^*(S_{\bf\tilde\Gamma})=\tilde A_1\oplus\cdots\oplus A_N, 
$$
$A_k\cong\mathcal B(H_k)$, $\tilde A_k\cong\mathcal B(\tilde H_k)$, 
it follows that there is a permutation $\sigma$ of $\{1,2,\dots, N\}$ such that 
$\theta(A_k)=\tilde A_{\sigma(k)}$, $k=1,\dots, N$.   For each $k$, the restriction of 
$\theta$ to $A_k\cong\mathcal B(H_k)$ can be viewed as a $*$-isomorphism 
of $\mathcal B(H_k)$ onto $\mathcal B(\tilde H_{\sigma(k)})$, which  
is implemented by a unitary operator $U_k: H_k\to \tilde H_{\sigma(k)}$
$$
\theta(a)=U_kaU_k^{-1}, \qquad a\in A_k, \quad k=1,\dots,N.   
$$
We conclude that 
\begin{align*}
\theta(S_{\bf\Gamma})
&=\{U_1\Gamma_1(z)U_1^{-1}\oplus\cdots\oplus U_N\Gamma_N(z)U_N^{-1}: z\in Z\}
\\
&=S_{\bf\tilde\Gamma}=\{\tilde\Gamma_{\sigma(1)}(z)\oplus\cdots\oplus \tilde\Gamma_{\sigma(N)}(z):z\in Z\}.  
\end{align*}
Since $\ker\Gamma_1\cap\cdots\cap\ker\Gamma_N=\ker\tilde\Gamma_1\cap\cdots\cap\ker\tilde\Gamma_N=\{0\}$, 
the equality of these two spaces of operators 
implies that for every $z\in Z$ there is 
a unique vector $\alpha(z)\in Z$ such that 
\begin{equation}\label{p2Eq1}
\tilde\Gamma_{\sigma(k)}(\alpha(z))=U_k\Gamma_k(z)U_k^{-1}, \qquad z\in Z, \quad k=1,\dots, N.   
\end{equation}
Moreover, since all of the maps $\Gamma_k$ and $\tilde\Gamma_k$ are complex linear, 
preserve the $*$ operation 
and map the unit of $Z$ to the corresponding identity operator, (\ref{p2Eq1}) implies 
that $\alpha$ must in fact 
be a unit preserving automorphism of the $*$-vector space structure of $Z$.  Hence 
(\ref{p2Eq1}) shows that the two parameter sequences ${\bf \Gamma}$ and ${\bf\tilde \Gamma}$ 
are equivalent.  

The proof of the converse is a straightforward reversal of this argument.

\section{Structure of nonreduced operator systems}\label{S:nr} 

We conclude with a discussion of how one constructs nonreduced operator systems 
using these methods.  We will outline the construction - giving precise 
definitions but no proofs - sketching the proof of only a single key lemma to illustrate 
the technique.

Let $S$ be an operator system that generates a finite dimensional \cstar\ $C^*(S)$, 
let $K$ be the boundary ideal for $S$, let $\dot S\subseteq C^*(S)/K$ be the 
corresponding reduced operator system in the $C^*$-envelope of $S$, 
and consider the central decomposition of 
$C^*(S)$ into factors
$$
C^*(S)=A_1\oplus\cdots\oplus A_s, \qquad A_k\cong\mathcal B(H_k).  
$$
A complete list of irreducible representations 
$\pi_k: C^*(S)\to\mathcal B(H_k)$ of $C^*(S)$ is associated with 
the minimal central projections.  Some of these 
representations are boundary representations and others are not.  Let us 
relabel so as to collect 
the boundary representations together with the first $N$ of the summands $A_1,\dots, A_N$ and the others 
as the remaining $n-N=M$ terms $A_{N+1},\dots, A_{N+M}$.  It follows that the boundary ideal is 
$\ker\pi_1\cap\cdots\cap\ker\pi_N$,  
$$
K=0\oplus\cdots\oplus 0\oplus A_{N+1}\oplus\cdots \oplus A_{N+M}, 
$$ 
and the quotient $C^*(S)/K$ is, in this finite dimensional setting, 
isomorphic to the remaining summand 
$$
C^*(S)/K\cong A_1\oplus\cdots\oplus A_N\oplus 0\oplus\cdots\oplus 0,   
$$
the quotient map being identified with the obvioous homomorphism of $C^*(S)$ on 
this initial segment.  We conclude from these observations that 
the most general nonreduced operator system is obtained in the following 
way: Let $A\oplus K$ be a direct sum of two 
finite dimensional \cstar s, and let $S$ be a linear subspace 
of $A\oplus K$ with the following properties:
\begin{enumerate}
\item[(i)]  $A\oplus K$ is the \cstar\ generated by $S$.  
\item[(ii)] Every irreducible representation of $A$ is a boundary representation for $S$.  
\item[(iii)] No irreducible representation of $K$ is a boundary representation for $S$.  
\end{enumerate}
Then as we have argued above, the boundary ideal for $S$ is $K$ and $A$ is identified with 
the $C^*$-envelope of $S$.  

Of course, we have not spelled out explicitly how one constructs all such configurations, but 
it is not hard to do so.  To sketch the details, 
let $Z=\mathbb C^d$ be a unital $*$-vector space as in section 
\ref{S:in}, and let 
$$
{\bf\Gamma}=(\Gamma_1,\dots, \Gamma_N), \qquad {\bf\Omega}=(\Omega_1,\dots,\Omega_M)
$$ 
be two tuples of unit-preserving $*$-preserving linear maps from the parameter space 
$Z$ to 
$\mathcal B(H_1), \dots, \mathcal B(H_N)$ and  
$\mathcal B(K_1),\dots, \mathcal B(K_M)$ 
respectively, such that the range of each of the $M+N$ maps is an irreducible operator system.  
One can use ${\bf\Gamma}=(\Gamma_1,\dots, \Gamma_N)$ to construct the 
$C^*$-envelope of an 
operator system and  
${\bf\Omega}=(\Omega_1,\dots, \Omega_M)$ to construct its boundary ideal in the following way.  
The required properties are:  
\begin{enumerate}
\item[(a)] (Subordination of ${\bf\Omega}$ to ${\bf\Gamma}$) 
For every $r=1,\dots,M$, every $p=1,2,\dots$,  and 
every $p\times p$ matrix $(z_{ij})$ over $Z$, 
$$
\|(\Omega_r(z_{ij}))\|\leq \max_{1\leq k\leq N}\|(\Gamma_k(z_{ij}))\|.   
$$
\item[(b)] (Strong separation in the components of ${\bf\Gamma}$) For every $k=1,\dots, N$, 
there is a $p=p(k)=1,2,\dots$ and a 
$p\times p$ matrix $(z_{ij})=(z_{ij}(k))$ with components in $Z$ such that 
$$
\|(\Gamma_k(z_{ij}))\|>\max_{l\neq k}\|(\Gamma_l(z_{ij}))\|.  
$$
\item[(c)] (Weak separation in all components) For every $1\leq r<s\leq M$, 
there is a $p=p(r,s)=1,2,\dots$ and a 
$p\times p$ matrix $(z_{ij})=(z_{ij}(r,s))$ over $Z$ such that 
$$
\|(\Omega_r(z_{ij}))\|\neq \|(\Omega_s(z_{ij}))\|,   
$$
and for every $1\leq r\leq M$ and every $1\leq k\leq N$, 
there is a $p=p(r,k)=1,2,\dots$ and a 
$p\times p$ matrix $(z_{ij})=(z_{ij}(r,k))$ over $Z$ such that 
$$
\|(\Omega_r(z_{ij}))\|\neq \|(\Gamma_k(z_{ij}))\|,   
$$
\end{enumerate}

We can assemble the component maps of ${\bf\Gamma}$ and ${\bf \Omega}$ 
to define 
an operator system (\ref{nrEq1}).  The following result shows 
that the \cstar\ generated by  
this operator system is the expected direct sum of matrix algebras: 

\begin{lem}\label{nrLem1}
Let ${\bf\Gamma}=(\Gamma_1,\dots, \Gamma_N)$  and 
${\bf\Omega}=(\Omega_1,\dots, \Omega_M)$ 
be two sequences of 
self adjoint unit preserving linear 
maps 
$$
\Gamma_k: Z\to \mathcal B(H_k), \qquad \Omega_r: Z\to \mathcal B(K_r)
$$
such that each $\Gamma_k(Z)$ and each 
$\Omega_r(Z)$ is an irreducible operator system, 
and which together satisfy the weak separation 
property (c).  Let $S_{\bf\Gamma,\Omega}$ be the associated operator system 
in $\mathcal B(H_1\oplus\cdots\oplus H_N\oplus K_1\oplus\cdots\oplus K_M)$ defined by 
\begin{equation}\label{nrEq1}
S_{\bf\Gamma,\Omega}=
\{\Gamma_1(z)\oplus\cdots\oplus \Gamma_N(z)\oplus\Omega_1(z)\oplus\cdots\oplus\Omega_M(z): z\in Z\}.
\end{equation}
Then $C^*(S_{\bf\Gamma,\Omega})=\mathcal B(H_1)\oplus\cdots\oplus \mathcal B(H_M)
\oplus\mathcal B(K_1)\oplus\cdots\oplus \mathcal B(K_M)$.  
\end{lem}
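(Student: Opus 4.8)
The plan is to follow exactly the route used for formula (\ref{p1Eq1}) in Section \ref{S:p1}, with the weak separation property (c) playing the role that the strong separation property played there. Write $B=C^*(S_{\bf\Gamma,\Omega})\subseteq\mathcal B(\mathcal H)$, where $\mathcal H=H_1\oplus\cdots\oplus H_N\oplus K_1\oplus\cdots\oplus K_M$, and relabel the $N+M$ summands of $\mathcal H$ as $E_1,\dots,E_{N+M}$. Every element of $S_{\bf\Gamma,\Omega}$ is block diagonal with respect to this decomposition by (\ref{nrEq1}), hence so is every element of $B$; thus each $E_i$ is a reducing subspace for $B$, the compression $\tau_i\colon B\to\mathcal B(E_i)$ is a $*$-homomorphism, and the identity representation of $B$ on $\mathcal H$ is $\tau_1\oplus\cdots\oplus\tau_{N+M}$. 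Let $L\colon Z\to B$ be the defining map $L(z)=\Gamma_1(z)\oplus\cdots\oplus\Gamma_N(z)\oplus\Omega_1(z)\oplus\cdots\oplus\Omega_M(z)$, so that $S_{\bf\Gamma,\Omega}=L(Z)$ and $\tau_i\circ L$ is the corresponding $\Gamma_k$ or $\Omega_r$.

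First I would check that each $\tau_i$ is an irreducible representation of $B$ that maps \emph{onto} $\mathcal B(E_i)$. Since $\tau_i$ is a $*$-homomorphism, $\tau_i(B)=\tau_i\bigl(C^*(S_{\bf\Gamma,\Omega})\bigr)=C^*\bigl(\tau_i(S_{\bf\Gamma,\Omega})\bigr)$, and $\tau_i(S_{\bf\Gamma,\Omega})$ is precisely the relevant $\Gamma_k(Z)$ or $\Omega_r(Z)$, which by hypothesis is an irreducible self adjoint family of operators containing $\mathbf 1$; so the double commutant theorem gives $\tau_i(B)=\mathcal B(E_i)$, exactly as in the proof of Corollary \ref{pcCor2}. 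In particular each $\tau_i$ is irreducible.

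Next I would use the separation hypothesis to show that $\tau_1,\dots,\tau_{N+M}$ are pairwise inequivalent. Given $i\neq j$, property (c) — together, in the full construction, with the strong separation property (b) for the ${\bf\Gamma}$-components — furnishes a $p=p(i,j)$ and a $p\times p$ matrix $(z_{mn})$ over $Z$ with $\|(\tau_i(L(z_{mn})))\|\neq\|(\tau_j(L(z_{mn})))\|$. If $\tau_i$ and $\tau_j$ were unitarily equivalent, the intertwining unitary applied blockwise would intertwine the $p$-fold inflations and hence preserve the norm on $M_p(B)$, contradicting this inequality. Being irreducible and pairwise inequivalent, the representations $\tau_i$ are pairwise disjoint: any bounded operator intertwining two of them is zero.

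Finally I would compute the commutant and invoke the double commutant theorem. If $T\in B'\subseteq\mathcal B(\mathcal H)$, then for $i\neq j$ the $(i,j)$ block of $T$ intertwines $\tau_j$ with $\tau_i$ and so vanishes by disjointness, whence $T$ is block diagonal; its $i$-th block commutes with $\tau_i(B)=\mathcal B(E_i)$ and so is a scalar. Thus $B'=\bigoplus_i\mathbb C\,\mathbf 1_{E_i}$, and therefore $B''=\bigoplus_i\mathcal B(E_i)$. Since $\mathcal H$ is finite dimensional, $B$ is norm closed, hence weakly closed, and it contains $\mathbf 1$; so $B=B''=\mathcal B(H_1)\oplus\cdots\oplus\mathcal B(H_N)\oplus\mathcal B(K_1)\oplus\cdots\oplus\mathcal B(K_M)$, as claimed. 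The one step that requires genuine care is the pairwise inequivalence of the $\tau_i$ in the third paragraph (and checking that the separation hypothesis indeed distinguishes \emph{every} pair among the $N+M$ coordinate maps); once mutual disjointness is in hand, the remainder is routine finite dimensional $C^*$-algebra bookkeeping, identical in spirit to the derivation of (\ref{p1Eq1}).
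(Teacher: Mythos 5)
Your proof is correct and takes essentially the same route as the paper's sketch: the coordinate compressions are irreducible and surjective by the double commutant theorem, the separation hypothesis makes them mutually inequivalent and hence disjoint, and the generated $C^*$-algebra is then the full direct sum (you finish by computing the commutant, the paper by noting that the coordinate projections lie in the center of $C^*(S_{\bf\Gamma,\Omega})$ --- the same bookkeeping). Your parenthetical caveat that property (c) as literally stated only separates $\Omega$--$\Omega$ and $\Omega$--$\Gamma$ pairs, so that property (b) must be invoked to distinguish the $\Gamma$-components from one another, is a fair reading of the hypotheses and is handled correctly.
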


\begin{proof}[Sketch of proof]
For every $r=1,\dots, M+N$, let $\pi_r$ 
be the representation of $C^*(S_{\bf\Gamma,\Omega})$ defined by 
\begin{equation}\label{nrEq2}
\pi_r(x_1\oplus\cdots\oplus x_{M+N})=x_r, \qquad 1\leq r\leq M+N.  
\end{equation}
$\pi_r$ is an irreducible representation because its range contains
the irreducible operator system $\Gamma_r(Z)$ if $1\leq r\leq N$ or 
$\Omega_{r-N}(Z)$ if $N<r\leq M+N$.  The hypothesis (c) implies 
that $\pi_1, \dots, \pi_{M+N}$ are mutually inequivalent, so 
by irreducibility, they are mutually disjoint.  It follows that 
for every $r$, the projection 
corresponding to the $r$th summand of the \cstar 
$$
\mathcal B(H_1)\oplus\cdots\oplus\mathcal B(H_N)\oplus\mathcal B(K_1)\oplus\cdots\oplus \mathcal B(K_M)
$$ 
belongs to the center of  $C^*(S_{\bf\Gamma,\Omega})$.  The assertion is now immediate.   
\end{proof}

Now assume that properties (a), (b) and (c) are satisfied, 
let $S_{\bf\Gamma,\Omega}$ be the operator system (\ref{nrEq1}) and 
let $A$ and $K$ be the summands of $C^*(S_{\bf\Gamma,\Omega})$ defined by 
\begin{align*}
A&=\mathcal B(H_1)\oplus\cdots\oplus \mathcal B(H_N)\oplus 0\oplus\cdots\oplus 0\\
K&=0\oplus\cdots\oplus 0\oplus\mathcal B(K_1)\oplus\cdots\oplus\mathcal B(K_M),   
\end{align*}
and let $\pi_1,\dots, \pi_{M+N}$ be the list of irreducible representations (\ref{nrEq2}).  
Then minor variations of arguments already given show that 
$\pi_1,\dots, \pi_N$ are all peaking representations for $S_{\bf\Gamma, \Omega}$, whereas 
none of $\{\pi_{N+1},\dots, \pi_{N+M}\}$ is peaking for $S_{\bf\Gamma,\Omega}$.  It follows 
from Corollary \ref{pcCor1} and Theorem \ref{prThm1} 
that $K$ is the boundary 
ideal for $S_{\bf\Gamma,\Omega}$ and that $A$ is identified with the 
$C^*$-envelope of $S_{\bf\Gamma,\Omega}$ in the manner described above.  
Moreover, every (nonreduced) 
operator system which generates a finite dimensional \cstar\ is isomorphic to 
one obtained from the above construction.  We omit those details.

\bibliographystyle{alpha}

\newcommand{\noopsort}[1]{} \newcommand{\printfirst}[2]{#1}
  \newcommand{\singleletter}[1]{#1} \newcommand{\switchargs}[2]{#2#1}

\end{document}